\theoremstyle{definition}
\theoremstyle{plain}
\newtheorem{proposition}{Proposition}
\newcommand{\ZZ}{\mathbb{Z}}
\title{Determination of division algebras with 243 elements}
\author{ I.F. R\'
ua\thanks{Departamento de Matem\'aticas, Universidad de Oviedo, rua@uniovi.es .}\and El\'\i as F. Combarro\thanks{Artificial Ingelligence Center,
University of Oviedo, \{elias,ranilla\}@aic.uniovi.es . }\and {J. Ranilla$^\dag$}}
\date{}
\begin{document}

\maketitle

\begin{abstract}
Finite nonassociative division algebras (i.e., finite semifields) with 243 elements are completely classified.
   
\end{abstract}

\section{Introduction}

Finite division rings, also known as \textbf{finite semifields}, are nonassociative rings with identity such that the set of nonzero elements is closed under the product (i.e., a loop \cite{Knu65,Cor99}). In case it has no identity they are known as \textbf{presemifields}. These objects have been studied in different contexts: finite
geometries (they coordinatize projective semifield planes \cite{Hall}), coding theory \cite{Calderbank3,codigosdesemicuerpos,Symplectic}, combinatorics and graph theory \cite{ISSAC}.

Computational
methods have been considered in the study of these objects. Among others, the classification of finite semifields of orders 16 \cite{Kleinfeld,Knu65}, 32 \cite{Walker,Knu65} and, more recently, orders 64 \cite{Rua} and 81 \cite{Dempwolff} have been obtained with the help of computational tools. Presently, only the cases of orders 128, 243 and 256 remain to achieve the classification of semifield planes of order 256 or less suggested in \cite{Kantor}. 

In this paper we present a classification of semifields with \textbf{243 elements} up to isotopy. It is a computer-assited classification based on the algorithms introduced in \cite{Rua}.

\section{Preliminaries}

In this section we collect some definitions and facts on finite
semifields, presemifields and planar functions (see, for instance \cite{Cor99,Knu65}). We restrict ourselves to the particular case of order $243=3^5$. The characteristic of a finite presemifield $D$ with $3^5$ elements is $3$, and $D$ is a $5-$dimensional algebra over $\mathbb{F}_3$.
    If $D$ is a semifield, then $\mathbb{F}_3$ can be chosen to be contained in its associative-commutative center $Z(D)$. Other relevant subsets of a finite semifield are the left, right, and middle nuclei ($N_l,N_r,N_m$), and the nucleus $N$ which have to be field extensions $\mathbb{F}_{3^e}$ ($e\le 5$).

Classification of presemifields is usually considered up to \emph{isotopy} (since this corresponds to classification of the corresponding projective planes up to isomorphism):
	If $D_1,D_2$ are two presemifields of order $3^5$, an {isotopy} between $D_1$ and $D_2$ is a triple $(F,G,H)$ of bijective $\mathbb{F}_3-$linear maps $D_1\to D_2$  such that
	$$H(ab)=F(a)G(b)\;,\; \forall a,b\in D_1.$$
Any presemifield is isotopic to a finite semifield.

If $\mathcal B=[x_1,\dots,x_5]$ is a $\mathbb{F}_3$-basis of a presemifield $D$, then there exists a unique set of constants $\mathbf A_{D,\mathcal B}=\{A_{i_1i_2i_3}\}_{i_1,i_2,i_3=1}^5\subseteq \mathbb{F}_3$ such that 
$$x_{i_1}x_{i_2}=\sum_{i_3=1}^5{A_{i_1i_2i_3}}x_{i_3}\; \forall i_1,i_2\in\{1,\dots,5\}$$
This set of constants is known as \textbf{cubical array} or \textbf{3-cube} corresponding to $D$ with respect to the basis $\mathcal B$, and it completely determines the multiplication in $D$. If $D$ is a presemifield, and $\sigma\in S_3$ (the symmetric group on the set $\{1,2,3\}$), then the set
$$\mathbf A_{D,\mathcal B}^\sigma=\{A_{i_{\sigma(1)}i_{\sigma(2)}i_{\sigma(3)}}\}_{i_1,i_2,i_3=1}^5\subseteq \mathbb{F}_3$$ is the 3-cube of a presemifield. 
Different choices of bases lead to isotopic presemifields.
Up to 
six projective planes can be constructed from a given finite semifield 
$D$ using the transformations of the group $S_3$. So, the classification of finite semifields can be reduced to the classification of the corresponding projective planes up to the action of the group $S_3$.

Finite semifields of order 243 can be constructed from sets of matrices with certain properties \cite{Dempwolff}\cite[Proposition 3]{Hentzel}.

\begin{proposition}\label{matrices}
    There exists a finite semifield $D$ of order $3^5$ if, and only if, there exists a
    set of $5$ matrices (a \textbf{standard basis} of $D$) $S_D=\{A_1,\dots,A_5\}\subseteq GL(5,3)$ (the set of invertible matrices of size $5$ over $\mathbb{F}_{3}$) such
    that:
    \begin{enumerate}
        \item $A_1$ is the identity matrix $I$;
        \item $\sum_{i=1}^5\lambda_i A_i\in GL(5,3)$, for all
        {non-zero tuples}
        $(\lambda_1,\dots,\lambda_5)\in \mathbb{F}_3^5$, that is,
        $(\lambda_1,\dots,\lambda_5)\not=\{\overrightarrow 0\}$.
        \item The first column of the matrix $A_i$ is the column vector
        $e_i^\downarrow$ with a $1$ in
    the $i$-th position, and $0$ everywhere else.
    \end{enumerate}
    In such a case, the set $\{B_{ijk}\}_{i,j,k=1}^d$, where $B_{ijk}=(A_j)_{ik}$, is the 3-cube corresponding to $D$ with respect to the canonical basis of $\mathbb{F}_3^5$.
\end{proposition}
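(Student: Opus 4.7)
The plan is to verify both implications by a direct translation between the semifield multiplication and the matrix conditions.

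For the forward direction, I would start with a finite semifield $D$ of order $3^5$ with two-sided identity $1$, extend $1$ to an $\mathbb{F}_3$-basis $\mathcal{B}=[x_1=1,x_2,\ldots,x_5]$, and for each $j$ take $A_j$ to be the coordinate matrix representing multiplication by $x_j$ in the row/column convention dictated by the stated formula $(A_j)_{ik}=B_{ijk}$, so that the $i$-th row of $A_j$ is the coordinate vector of $x_ix_j$. The three matrix conditions then translate into three elementary facts about $D$. Condition 1 is the statement that multiplication by the identity $x_1=1$ is the identity map. Condition 2 records the defining property that a semifield has no zero divisors, since the linear combination $\sum_i\lambda_iA_i$ is the coordinate matrix of multiplication by the element $\sum_i\lambda_ix_i$, which is nonzero precisely when $(\lambda_1,\ldots,\lambda_5)\neq\vec 0$ and therefore generates an invertible multiplication map. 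Condition 3 encodes the identity property from the other side: evaluating the $j$-th multiplication operator on the coordinate vector of $1$ must return the coordinate vector $e_j^\downarrow$ of $x_j$, which is exactly the first column of $A_j$.

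For the reverse direction, I would start from matrices $S_D=\{A_1,\ldots,A_5\}\subseteq GL(5,3)$ satisfying 1--3 and equip $\mathbb{F}_3^5$ (with canonical basis $e_1,\ldots,e_5$) with the $\mathbb{F}_3$-bilinear multiplication $e_i\cdot e_j=\sum_k(A_j)_{ik}e_k$. I would then verify the semifield axioms: conditions 1 and 3 combine to show that $e_1$ is a two-sided identity (each takes care of one side under the chosen convention), while condition 2 says that, for every nonzero $w\in\mathbb{F}_3^5$, the matrix $\sum_iw_iA_i$ lies in $GL(5,3)$, which translates directly into the statement that multiplication by $w$ is a bijection; hence $\mathbb{F}_3^5\setminus\{0\}$ is closed under multiplication and forms a loop, yielding a semifield structure. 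The identification of the 3-cube as $B_{ijk}=(A_j)_{ik}$ is then built into the construction.

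There is no deep difficulty; the whole argument is a bookkeeping exercise. The main care needed is in fixing the row/column convention consistently so that the three matrix conditions translate respectively into ``$x_1$ is an identity on one side,'' ``no zero divisors,'' and ``$x_1$ is an identity on the other side''; once that is done, every step reduces to a one-line verification.
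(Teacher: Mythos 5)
The paper itself gives no proof of this proposition (it is quoted from Dempwolff and from Hentzel--R\'ua), and your overall strategy --- reading the $A_j$ as coordinate matrices of multiplication operators and translating the three conditions into ``identity on one side'', ``no zero divisors'', ``identity on the other side'' --- is exactly the standard argument. However, you have not in fact fixed the row/column convention consistently, and this is precisely the point you flagged as the one needing care. With your stated convention (``the $i$-th row of $A_j$ is the coordinate vector of $x_ix_j$''), condition 1 ($A_1=I$) says $x_ix_1=x_i$ for all $i$, i.e.\ $x_1$ is already a \emph{right} identity; and condition 3, which concerns the first \emph{column} of $A_j$, then only says that the $x_1$-coordinate of $x_ix_j$ equals $\delta_{ij}$ --- this is not the left-identity condition. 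Your reading of condition 3 as ``evaluating the $j$-th multiplication operator on the coordinate vector of $1$'' tacitly switches to a column-vector action ($A_je_1^{\downarrow}$ is the first column only if $A_j$ acts on columns), which contradicts the row convention you just set up. As written, conditions 1 and 3 would both be statements about the same side, and the converse direction would only produce a presemifield with a one-sided identity.

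The repair is the convention the paper itself adopts two paragraphs later: take $A_j$ to be the matrix of \emph{left} multiplication $L_{x_j}$ acting on column vectors, so that the $k$-th column of $A_j$ is the coordinate vector of $x_jx_k$. Then condition 1 says $x_1y=y$ (left identity), condition 3 says $A_je_1^{\downarrow}=e_j^{\downarrow}$, i.e.\ $x_jx_1=x_j$ (right identity), and condition 2 says every $L_w$ with $w\neq 0$ is bijective, i.e.\ there are no zero divisors. One further small point in the converse direction: condition 2 directly gives bijectivity of the left translations only; to conclude that the nonzero elements form a loop you should add the (easy, but not free) observation that in a finite-dimensional algebra the absence of zero divisors forces the right translations $R_w$, $w\neq 0$, to be injective and hence bijective as well. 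With those two adjustments your bookkeeping argument goes through and matches the intended proof.
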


If we identify the elements of $\mathbb{F}_3$ with the natural numbers $\{0,1,2\}$, then  {we can use the following convention to represent a semifield $D$} of order $3^5$. Let $S_D=\{A_1,\dots,A_5\}$ be one of its standard bases. Since the first column of $A_i$ has always a one in the $i$-th position and zeroes elsewhere, we can encode $A_i$ as the natural number $\sum_{j=0}^{19}a_j3^j$, where
$$\left(%
\begin{array}{c|ccccc}
 & a_{19} &  a_{14}& a_{9} & a_{4} \\
 &  a_{18} &  a_{13}& a_{8} & a_{3} \\
   e_i^\downarrow& a_{17} &  a_{12}& a_{7} & a_{2} \\
     &  a_{16} &  a_{11}& a_{6} & a_{1} \\
  & a_{15} & a_{10} & a_5 & a_0 \\
\end{array}%
\right)$$

For a concrete representation 
of the semifield one can identify the semifield with $\mathbb{F}_3^5$, and the multiplication with $x*y= {\sum_{i=1}^5} x_i A_i y$, i.e., $A_i$ is the matrix of left multiplication by the element $e_i$, where $\{e_1,\dots,e_5\}$ is the canonical basis of $\mathbb{F}_3^5$. So, the elements of the standard basis are simply coordinate matrices of the linear maps $L_{e_i}:D\to D$, $L_{e_i}(y)=e_i*y$.


The following result shows that, in order to classify finite semifields of order 243, it is possible to impose for extra conditions on the standard bases.

\begin{proposition}
  {Let $D$ be a finite semifield of order 243. Then, there exists an isotope $D'$ of $D$ such that, if $S_{D'}=\{A_1,\dots,A_5\}$ is a standard basis of $D'$, then $A_2$ has one of the following forms:}
  \begin{equation}\label{primera}C(x^5+x^3+x+1)\hbox{ , }C(x^5+2x+1)\hbox{ , }C(x^5+x^3+x+2)\hbox{ , }C(x^5+2x+2)\end{equation}
  or
  \begin{equation}\label{segunda}\left(\begin{array}{cc}C(x^3+x^2+x+2)&0\\0&C(x^2+2x+2)\end{array}\right)\hbox{ , }\left(\begin{array}{cc}C(x^3+2x^2+x+1)&0\\0&C(x^2+x+2)\end{array}\right)\end{equation}
  where $C(p(x))$ is the companion matrix of the polynomial $p(x)\in \mathbb{F}_3[x]$.
\end{proposition}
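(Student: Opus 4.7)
The strategy is to place $A_2 = L_{e_2}$ into a rational canonical form, exploiting the isotopy freedom together with the $S_3$-action on the 3-cube. I would first establish a dichotomy on the characteristic polynomial $\chi_{A_2}(x) \in \mathbb{F}_3[x]$: whenever $e_2 \notin \mathbb{F}_3 \cdot 1$ (which is forced by the standard-basis conditions of Proposition~\ref{matrices}, since $A_2 = \lambda I$ would contradict either condition~2 or condition~3), the polynomial $\chi_{A_2}$ has no root in $\mathbb{F}_3$. Indeed, an eigenvalue $\lambda \in \mathbb{F}_3$ of $L_{e_2}$ would furnish a nonzero $v \in D$ with $(e_2 - \lambda) \cdot v = 0$, forcing $e_2 = \lambda$ in the zero-divisor-free semifield $D$, a contradiction. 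Hence the monic degree-$5$ polynomial $\chi_{A_2}$ has no linear factor, so its irreducible factorization over $\mathbb{F}_3$ is either (a) a single irreducible quintic, or (b) the product of an irreducible cubic $q(x)$ and an irreducible quadratic $r(x)$. In either case $\chi_{A_2}$ is squarefree and equals the minimal polynomial of $A_2$.

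The second step invokes rational canonical form. In case (a), $A_2$ is cyclic, and $1 \in D$ is itself a cyclic vector for $L_{e_2}$, since the $L_{e_2}$-annihilator of $1$ is a nontrivial divisor of the irreducible quintic $\chi_{A_2}$ and therefore equals $\chi_{A_2}$. Consequently, in the basis $\{1, L_{e_2}(1), L_{e_2}^2(1), L_{e_2}^3(1), L_{e_2}^4(1)\}$ the matrix $A_2$ is the companion matrix $C(\chi_{A_2})$, and this basis is standard in the sense of Proposition~\ref{matrices} (first vector $1$; first column of $A_2$ equal to $e_2^\downarrow$). In case (b), $D = \ker q(L_{e_2}) \oplus \ker r(L_{e_2})$, and $1$ lies in exactly one summand; after possibly replacing $e_2$ by another element within the isotopy class so that $1 \in \ker q(L_{e_2})$, and picking any cyclic vector $u$ of $L_{e_2}|_{\ker r(L_{e_2})}$, the basis $\{1, L_{e_2}(1), L_{e_2}^2(1), u, L_{e_2}(u)\}$ is standard and realizes $A_2$ as the block-diagonal matrix $\mathrm{diag}(C(q), C(r))$.

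It remains to reduce the (possibly many) minimal polynomials arising this way to the canonical representatives listed in (\ref{primera}) and (\ref{segunda}). Within the isotopy class of $D$, one has the rescaling $e_2 \mapsto \lambda e_2$ for $\lambda \in \mathbb{F}_3^*$ (which, for instance, interchanges $x^5+x^3+x+1 \leftrightarrow x^5+x^3+x+2$, $x^5+2x+1 \leftrightarrow x^5+2x+2$, and also the two matrices in (\ref{segunda})), further principal isotopies that allow replacing $e_2$ by a broad class of nonzero elements, and the $S_3$-action on the 3-cube. Enumerating the orbits of the $48$ irreducible quintics (respectively the $24$ products of an irreducible cubic and an irreducible quadratic) under these operations yields exactly the four polynomials of (\ref{primera}) (respectively the two matrices of (\ref{segunda})) as a complete set of orbit representatives. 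This last enumeration is the main obstacle: the preceding two steps are immediate consequences of linear algebra combined with the semifield property, but the orbit count itself is combinatorial and is most cleanly carried out by explicit computation along the lines of the algorithms of \cite{Rua}, in keeping with the computer-assisted spirit of this paper.
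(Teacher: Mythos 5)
Your first two steps (the characteristic polynomial of $A_2$ has no root in $\mathbb{F}_3$, hence is an irreducible quintic or an irreducible cubic times an irreducible quadratic, and then rational canonical form with $1$ as first basis vector) are correct, but the heart of the proposition --- that the polynomial can be taken from a list of exactly six --- is precisely what you do not prove: you defer it to an orbit enumeration over all $48$ irreducible quintics and $24$ cubic--quadratic products, acknowledge it is ``the main obstacle,'' and propose to settle it by computation. The paper performs no such enumeration; its key idea is a short existence argument via the Chevalley--Warning theorem. Writing a generic element $b=\lambda_1x_1+\dots+\lambda_5x_5$ and the characteristic polynomial of $L_b$ as $x^5+\rho_1(\overline\lambda)x^4+\rho_2(\overline\lambda)x^3+\rho_3(\overline\lambda)x^2+\rho_4(\overline\lambda)x+\rho_5(\overline\lambda)$ with $\rho_i$ homogeneous of degree $i$, the system $\rho_1(\overline\lambda)=\rho_3(\overline\lambda)=0$ has degree sum $1+3=4<5$ variables, so by \cite[Theorem 6.6]{Lidl} it has a nonzero solution; the resulting $b$ is non-scalar because $(x-1)^5$ and $(x-2)^5$ have nonzero trace. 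Hence some non-scalar element has characteristic polynomial of the shape $x^5+a_2x^3+a_4x+a_5$, and intersecting with ``no linear factors'' leaves exactly the six polynomials behind (\ref{primera}) and (\ref{segunda}). The isotopy moves you invoke cannot substitute for this: $e_2\mapsto\lambda e_2$ only replaces $\chi(x)$ by $\pm\chi(\pm x)$ and cannot kill a nonzero $x^4$-coefficient, while ``replacing $e_2$ by a broad class of nonzero elements'' is useful only once one knows that \emph{some} element has a characteristic polynomial of the restricted shape --- which is exactly what Chevalley--Warning delivers. Without it, your claimed set of orbit representatives is an unsubstantiated assertion.

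There is a second, smaller flaw in your case (b). You assert that $1$ lies in exactly one summand of $\ker q(L_{e_2})\oplus\ker r(L_{e_2})$; in general $1=u+v$ with both components nonzero, in which case $1$ is a cyclic vector with annihilator $qr$ and your proposed basis realizes $A_2$ as $C(qr)$, not as the block-diagonal matrix in (\ref{segunda}). The reduction to the situation where $1$ generates the cubic summand, which you dispatch parenthetically (``after possibly replacing $e_2$ by another element within the isotopy class''), is a genuine step: the paper obtains it from \cite[Lemma 5]{Alb60}, which supplies an isotope $D'$ and an element $c$ whose minimal function is the cubic factor, after which the basis $\{1,c,c^2,d,c*d\}$ (using that the two factors are coprime) yields the stated block form.
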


\begin{proof}
  {We first show that there exists an element $b\in D\setminus \mathbb{F}_3$ (i.e., \emph{non-scalar}) such that the characteristic polynomial of the linear transformation $L_b:D\to D$ ($L_b(x)=b*x$) is $x^5+a_2x^3+a_4x+a_5\in \mathbb{F}_3[x]$.}
{We fix $\{x_1,x_2,x_3,x_4,x_5\}$, a $\mathbb{F}_3$-basis of $D$, and consider the characteristic polynomial of $L_b$ for a generic element $b=\lambda_1x_1+\lambda_2x_2+\lambda_3x_3+\lambda_4x_4+\lambda_5x_5\in D$:}
$$ {x^5+\rho_1(\overline\lambda)x^4+\rho_2(\overline \lambda)x^3+\rho_3(\overline \lambda)x^2+\rho_4(\overline \lambda)x+\rho_5(\overline\lambda)}$$
  {where $\rho_i(\overline \lambda)$ is a homogeneous polynomial in $\mathbb{F}_3[\overline \lambda]=\mathbb{F}_3[\lambda_1,\lambda_2,\lambda_3,\lambda_4,\lambda_5]$, of degree $i$. Consider the system of equations $\rho_1(\overline \lambda)=\rho_3(\overline \lambda)=0$. From the Chevalley-Warning theorem \cite[Theorem 6.6]{Lidl} it has a nonzero solution, i.e., there exists a nonzero element $b\in D$ such that the characteristic polynomial of $L_b$ is of the claimed form.} 
  
  The element $b$ can not be $1$ or $2$, since the trace of the polynomials $(x-1)^5$ and $(x-2)^5$ is not zero, and so $b$ is non-scalar. Because of \cite{Alb60}[Lemma 5], the characteristic polynomial of $L_b$ has no linear factors, and so it has to be one of the following six polynomials:
  $$x^5+x^3+x+1\hbox{ , }x^5+2x+1\hbox{ , }x^5+x^3+x+2\hbox{ , }x^5+2x+2\hbox{ , }x^5+2x^3+1\hbox{ , }x^5+2x^3+2$$
  
  The first four polynomials are irreducible, and so the set $\{1,b,b^2,b*b^2,b*(b*b^{(2})\}$ is a $\mathbb{F}_3$-basis of $D$ (\cite[Section 3]{Alb60}, there on the right, here on the left). This provides a standard basis $S_D=\{[L_1],[L_b],[L_{b^2}],[L_{b*b^2}],[L_{b*(b*b^{(2})}]\}$ where $[L_b]$ has one of the  forms in equation (\ref{primera}) (notice that $D$ is an isotope of itself).

On the other hand, the last two polynomials have the following factorizations:
$$(x^3+x^2+x+2)(x^2+2x+2)\hbox{  , }(x^3+2x^2+x+1)(x^2+x+2)$$
  Because of \cite[Lemma 5]{Alb60}, there exists an isotope $D'$ of $D$ and an element $c\in D'$ such that its \emph{minimal function} is the first factor. So, because both factors are croprime, we can choose a $\mathbb{F}_3$-basis of $D'$ of the form $\{1,c,c^2,d,c*d\}$. In the corresponding standard basis $S_{D'}=\{[L_1],[L_c],[L_{c^2}],[L_{d}],[L_{c*d}]\}$, the matrix $[L_c]$ has one of the forms in equation (\ref{segunda}).
  
\end{proof}

\section{The Semifield Planes of order 243: a classification}

We obtained a complete classification of finite semifields of order 243 with the help of the algorithm introduced in \cite{Rua}. This algorithm searchs for standard bases of division algebras with 243 elements, and classify them according to equivalent $S_3$-equivalent semifields. This is done either for partial or for complete standard bases.

Our algorithm was processed in parallel in Magerit, a cluster of 1204 nodes eServer BladeCenter (1036 JS20 and
168 JS21, both PowerPC 64 bits). Each JS20 node has two processors IBM
PowerPC single-core 970FX (two cores) with 2.2 GHz, 4 GB of RAM and 40
GB of local hard disk. On the other hand, each JS21 node has two
processors IBM PowerPC dual-core 970FX (four cores) with 2.2 GHz, 8 GB
of RAM and 80 GB of local hard disk.
It was installed in 2006 and reached the 9th fastest in Europe and the
34th in the world (Top 500: List from November 2006). In May 2008 it
was upgraded to reach 16 TFLOPS. This powerful cluster has allowed us to fill the gap between the commutative and the nonconmutative case.

Next we present the results obtained from our classification (Table \ref{tabla1}). Let us compare the number of $S_3$-equivalence classes, semifield planes, and coordinatizing finite semifields which were found, with those previously known \cite{conmutativo}.

\begin{table}[htdp]
\begin{center}
\begin{tabular}{|c|c|c|c|}
  \hline
   \textbf{Number of classes}  & \textbf{$S_3$-action} & \textbf{Isotopy} & \textbf{Isomorphism}\\
  \hline
  Previously known &  7&19&27313 \\

  \hline
   Actual number & 9&23&85877 \\
   \hline
\end{tabular}
\caption{{Number of division algebras with 243 elements}}\label{tabla1}
\end{center}
\end{table}%

As we can see two new $S_3$-classes exist, that can not be constructed from commutative semifields. {And four new semifield planes of such an order appear.} Next we present standard bases of these classes ($A_1$ is always the identity matrix) (Table \ref{tabla2}).

\begin{table}[htdp]
\begin{center}
\begin{tabular}{|c|c|c|c|c|c|}
  \hline
  $\#$ & $A_2$  & $A_3$ & $A_4$ &  $A_5$ & \# Semifield\\
  \hline
I       &       129317742& 43151760& 25524498& 2715668620 
 &$\mathbb{F}_{3^5}$\\\hline
II      &       129317638 &44994959 &28587138 &1226007534& Albert's twisted field\\\hline
III     &       129317781 &52757047 &20739470 &3274303432 &Albert's twisted field\\\hline
IV      &       129317742 &43393513 &26923067 &2713804376   & Coulter-Matthews'\\\hline
V       &       129317742 &43215002 &26537147 &2719346408   & Ding-Yuan's\\\hline
VI      &       129317742 &43185096 &19259172 &2718371119 & \cite{conmutativo}\\\hline
VII      &       129317742 &43215002 &26558192 &2719382129 & \cite{conmutativo}\\\hline
VIII      &       129317636 &14673002& 1139489406& 3073918154 &-
\\\hline
IX     &       129317636 &18089998 &3416237282 &1030364558 & -\\\hline
\end{tabular}
 \caption{{Standard bases of division algebras with $243$ elements (VIII and IX are new semifields)}}\label{tabla2}\end{center}
\end{table}

For these semifields, we have computed some information (see \cite{Rua} for the notation and details). Namely, the order of their center and nuclei, $ZN=(Z,N,N_l,N_m,N_r)$, the list of all principal isotopes, and the order of their isomorphism groups. The length of the orbits in the fundamental triangle $(L_x,L_\infty,L_y)$ was also computed given in the form $\sum_{i=1}^ra_i[b_i]$, if $a_i$ cycles of length $b_i$ ($i=1,\dots,r$) exist. Also, where possible, some information on the autotopism group has been include (Table \ref{numerazos}).

\begin{table}
\begin{tabular}{|c|c|c|c|c|c|}
  \hline
   \textbf{Plane}  & $\mathbf{S_3-class}$ & \textbf{$|$At$|$} & $\mathbf{(L_x,L_\infty,L_y)}$&\textbf{S/A sum}& $\mathbf{ZN}$\\
  \hline
  \textbf{I} 	&$\begin{array}{c}\includegraphics[width=1cm,height=1cm]{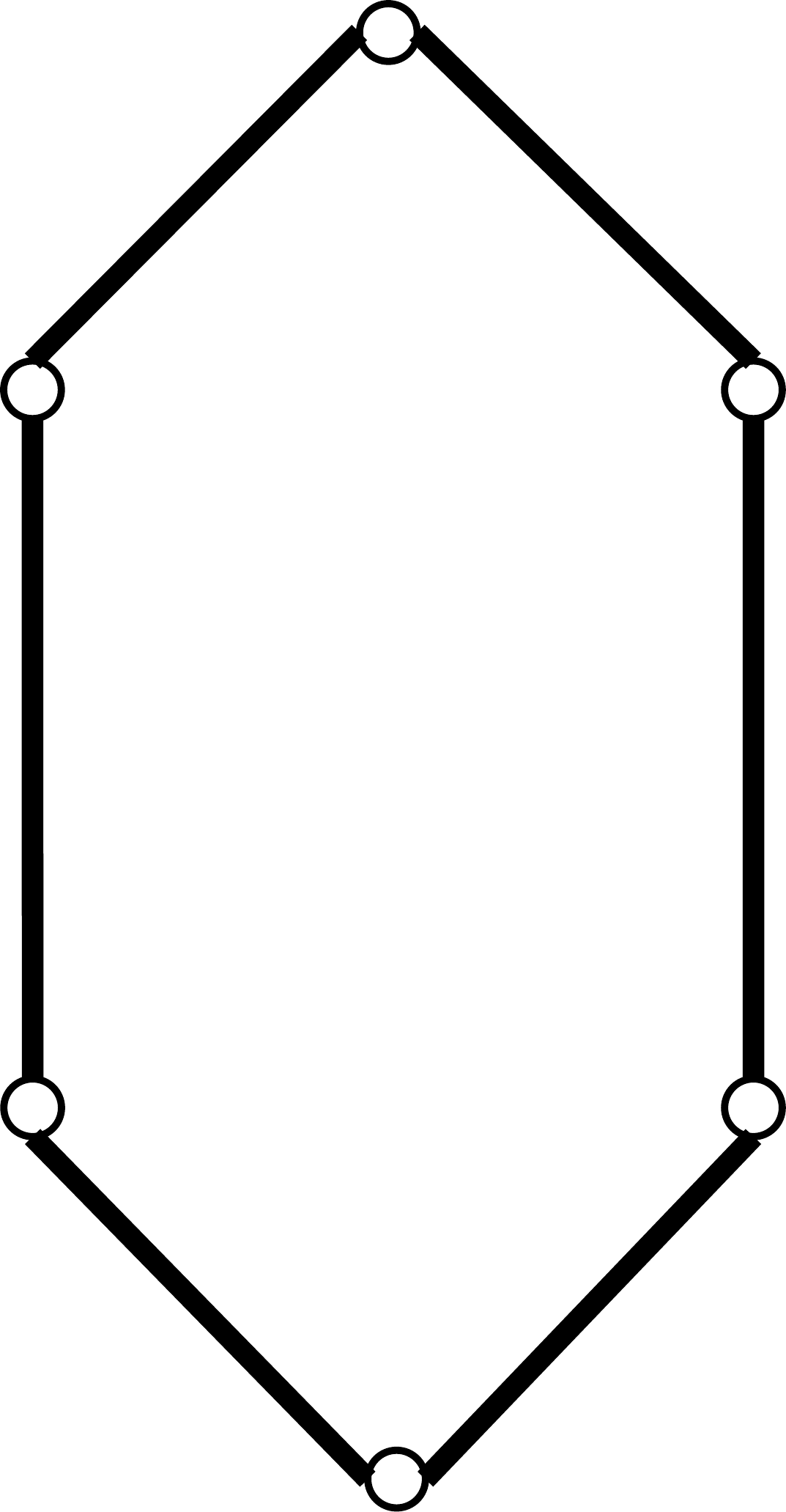}\\\end{array}$&	
  $292820$ 
  & $\begin{array}{c}    2[1]+1[242]\\    2[1]+1[242]\\    2[1]+1[242]\\\end{array}$  	 & $\frac{1}{5}$	& $(243,243,243,243,243)$\\\hline
\textbf{II} 	&$\begin{array}{c}\includegraphics[width=1cm,height=1cm]{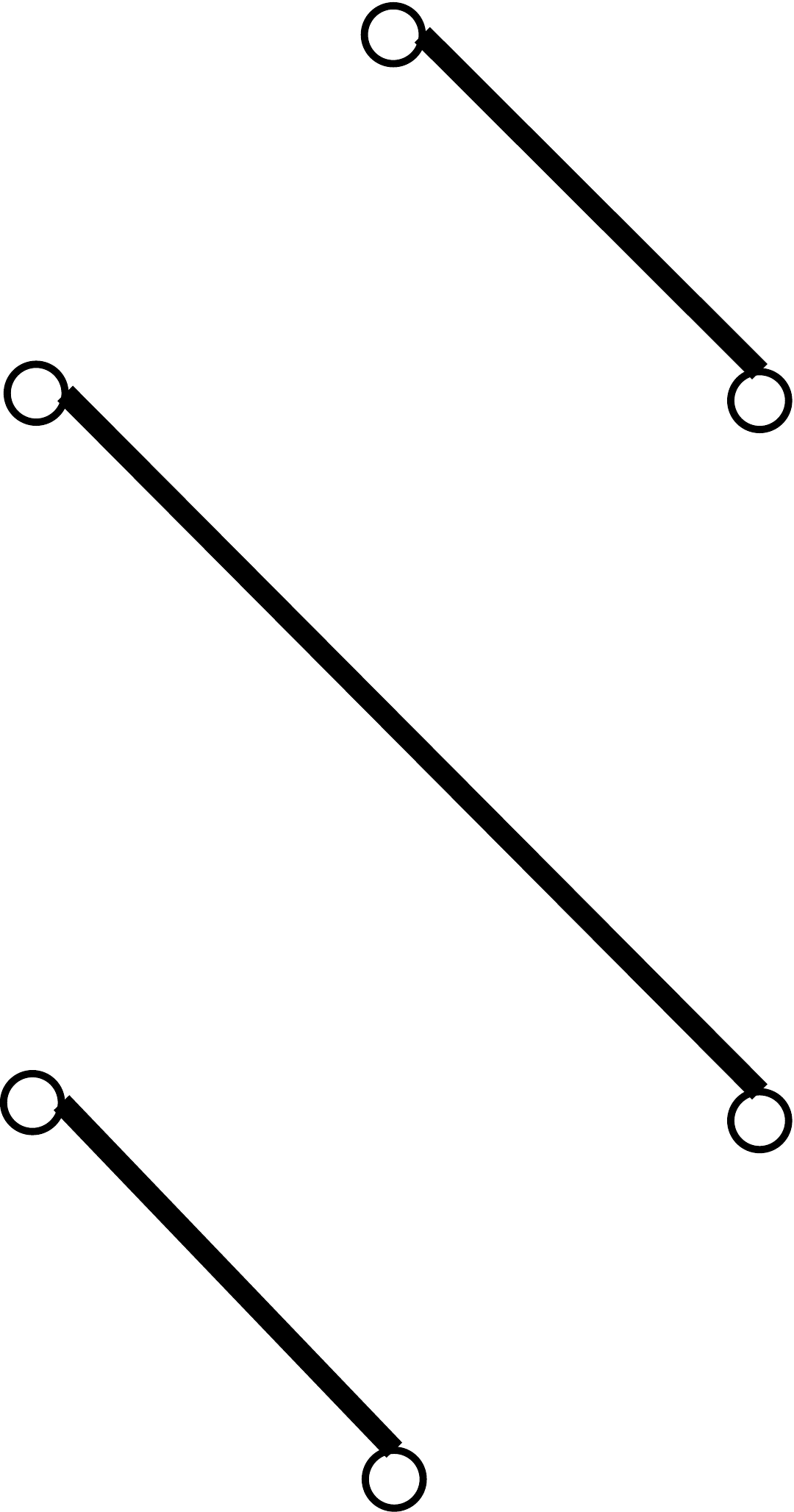}\\\end{array}$&	$  \begin{array}{c} 2420 \\ \hbox{Solvable}\\\end{array}$ & $\begin{array}{c}    2[1]+1[242]\\    2[1]+1[242]\\    2[1]+1[242]\\\end{array}$	 & $\frac{24}{1}+\frac{1}{5}$	& $(3,3,3,3,3)$\\\hline
\textbf{III} 	&$\begin{array}{c}\includegraphics[width=1cm,height=1cm]{hexagonLinGr.pdf}\\\end{array}$&	$  \begin{array}{c} 2420 \\ \hbox{Solvable}\\\end{array}$ & $\begin{array}{c}    2[1]+1[242]\\    2[1]+1[242]\\    2[1]+1[242]\\\end{array}$ & $\frac{24}{1}+\frac{1}{5}$	& $(3,3,3,3,3)$\\\hline
\textbf{IV} 	&$\begin{array}{c}\includegraphics[width=1cm,height=1cm]{hexagonLinGr.pdf}\\\end{array}$&	$  \begin{array}{c} 20 \\ \ZZ_2\times \ZZ_{10}\\\end{array}$ & $\begin{array}{c} 2[1]+1[2]+24[10]\\ 2[1]+1[2]+24[10]\\2[1]+1[2]+24[10]\\\end{array}$	 & $\frac{2928}{1}+\frac{1}{5}$	& $(3,3,3,3,3)$\\\hline
\textbf{V} 	&$\begin{array}{c}\includegraphics[width=1cm,height=1cm]{hexagonLinGr.pdf}\\\end{array}$&	$  \begin{array}{c} 20 \\ \ZZ_2\times \ZZ_{10}\\\end{array}$ & $\begin{array}{c} 2[1]+1[2]+24[10]\\ 2[1]+1[2]+24[10]\\2[1]+1[2]+24[10]\\\end{array}$	 & $\frac{2928}{1}+\frac{1}{5}$	& $(3,3,3,3,3)$\\\hline
\textbf{VI} 	&$\begin{array}{c}\includegraphics[width=1cm,height=1cm]{hexagonLinGr.pdf}\\\end{array}$&	$  \begin{array}{c} 20 \\ \ZZ_2\times \ZZ_{10}\\\end{array}$ & $\begin{array}{c} 2[1]+1[2]+24[10]\\ 2[1]+1[2]+24[10]\\2[1]+1[2]+24[10]\\\end{array}$	 & $\frac{2928}{1}+\frac{1}{5}$	& $(3,3,3,3,3)$\\\hline
\textbf{VII} 	&$\begin{array}{c}\includegraphics[width=1cm,height=1cm]{hexagonLinGr.pdf}\\\end{array}$&	$  \begin{array}{c} 220 \\ \ZZ_2\times\ZZ_2\times \\(\ZZ_5\ltimes\ZZ_{11})\\\end{array}$ & $\begin{array}{c} 2[1]+1[22]+2[110]\\2[1]+1[22]+2[110]\\2[1]+1[22]+2[110]\\\end{array}$	 & $\frac{266}{1} + \frac{1}{5}$	& $(3,3,3,3,3)$\\\hline
\textbf{VIII} 	&$\begin{array}{c}\includegraphics[width=1cm,height=1cm]{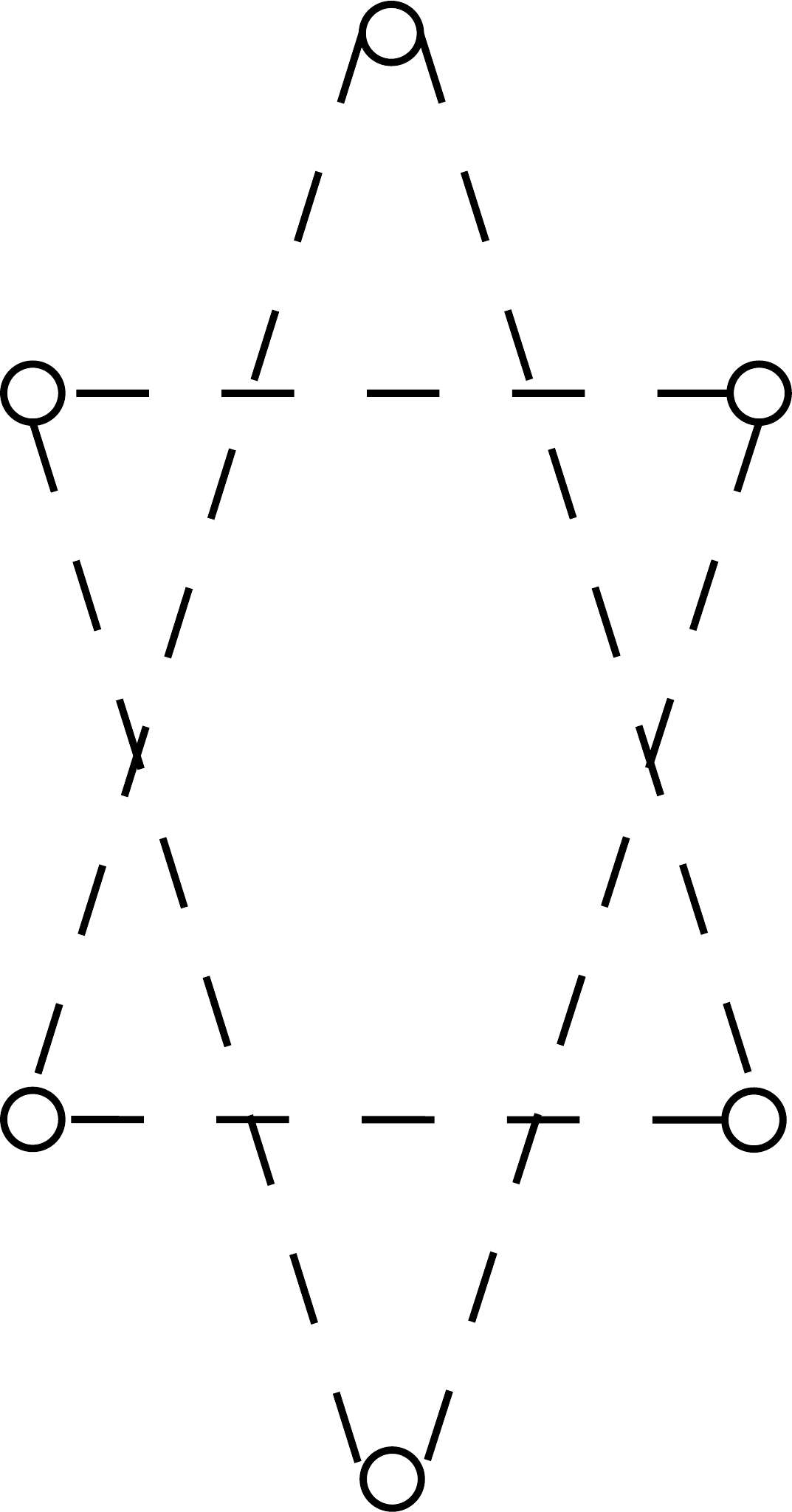}\\\end{array}$&	$ \begin{array}{c}
4
\\
\ZZ_2\times\ZZ_2\\\end{array}$ & $\begin{array}{c}
2[1]+121[2]\\
2[1]+121[2]\\
2[1]+121[2]\\\end{array}$
	 & $\frac{14641}{1}$& $(3,3,3,3,3)$\\\hline
\textbf{IX} 	&$\begin{array}{c}\includegraphics[width=1cm,height=1cm]{hexagonTriangTr.pdf}\\\end{array}$&		$ \begin{array}{c}
4
\\
\ZZ_2\times\ZZ_2\\\end{array}$ & $\begin{array}{c}
2[1]+121[2]\\
2[1]+121[2]\\
2[1]+121[2]\\\end{array}$
	 &$\frac{14641}{1}$	& $(3,3,3,3,3)$\\\hline
\end{tabular}
\caption{{Division algebras with $243$ elements and their properties}}
\label{numerazos}\end{table}%

\section*{Acknowledgments}

This work has been partially supported by MEC - MTM - 2010 - 18370 - C04 - 01, IB08-147, MEC-TIN-2007-61273 and
MICINN-TIN-2010-14971. The authors thankfully acknowledge the computer
resources, technical expertise and assistance provided by the
\emph{{Centro de Supercomputaci\'on y Visualizaci\'on de Madrid
(CeSViMa)}} and the Spanish Supercomputing Network.

\end{document}